\documentclass[12pt]{article}
\usepackage[a4paper, total={7in, 9.5in}]{geometry}
\usepackage{amsmath,amsfonts,amssymb,amsthm}
\usepackage{graphicx}
\usepackage{color}
\usepackage{hyperref}
\usepackage{ragged2e}

\newtheorem{algo}{Algorithm}[section]

\newtheorem{definition}{Definition}[section]
\newtheorem{thrm}{Theorem}[section]
\newtheorem{exmp}{Example}[section]
\newtheorem{note}{Note}
\newtheorem{remark}{Remark}
\usepackage[title]{appendix}

\title{Accuracy Preserving ENO and WENO Schemes using Novel Smoothness Measurement\footnote{Authors acknowledge Council of Scientific and Industrial Research(CSIR), New Delhi, Govt. of India for providing fellowship to Mr. Biswarup Biswas (File no. 09/1045(0009)2K17) and Science and Engineering RB India for its financial support towards procuring computing facility (File No.EMR/2016/000394 ) }}

\author{Biswarup Biswas\footnote{Email: biswarupbiswas.b@res.srmuniv.ac.in} and Ritesh Kumar Dubey\footnote{corresponding author,\; Email: riteshkumar.d@res.srmuniv.ac.in} \\ SRM Research Institute \& Department of Mathematics\\
SRM Institute of Science and Technology}
\date{}
\begin{document}
   \maketitle
\begin{abstract}
	A novel procedure is given for choosing  smoothest stencil to construct less oscillatory ENO schemes. The procedure is further used to define smoothness parameter in the non-linear weights of new WENO schemes. The main significant features of these new ENO and WENO schemes is that they are less oscillatory and achieve their relevant order of accuracy in the presence of critical points in the exact solution. It is shown theoretically as well as computationally in $L^1$ and $L^\infty$ norm. Moreover, computational results are given to show less oscillatory behavior of the new WENO scheme compared to WENO5-JS and WENO5-Z schemes. 
\end{abstract}
{\bf Keywords:}	Hyperbolic  conservation  laws, ENO/WENO interpolation and reconstruction, non-oscillatory schemes,  Smoothness  indicators,  non-linear weights, Curve length.\\
{\bf AMS subject classifications}. 65M06, 65M06, 35L65

\section{Introduction}
Modern arbitrary high order schemes for solving hyperbolic conservation laws use high order reconstruction procedure relying on its non-oscillatory property. In general, such non oscillatory property is achieved by using the idea of adaptive reconstruction. Among the higher order non oscillatory schemes, class of (Essentially Non-oscillatory) ENO and (Weighted Essentially Non-oscillatory) WENO schemes are highly utilized and cited \cite{Shu1988,Jiang1996,Shu1997}. For more on ENO and WENO schemes we refer interested reader to \cite{Shu1997,Shu2009}. Apart from ENO and WENO schemes, a large class of schemes for conservation laws based on a maximum principles also have been popular such as  \cite{Harten1984,ZHANG2010,RBG}. Interested can look into \cite{leveque2002finite,Thomas1999,Whitham1999,Toro2009}.

ENO/WENO schemes for solving conservation laws basically use a ENO/WENO reconstruction in order to reconstruct the conservative variables in the schemes \cite{Shu1997}. The ENO schemes were first proposed in \cite{Harten1987UniformlyI} and their efficient implementation is done in \cite{Shu1988}. ENO schemes are further extended for Hamilton-Jacobi equation in \cite{Osher1991}. ENO and WENO schemes are also used for solving convection dominated problems in \cite{Shu2009}.  Recently, sign stability property of ENO and third order WENO reconstructions is utilized to construct entropy stable schemes called TECNO/mTECNO schemes in \cite{FMT-TECNO,Biswas2017}. However, these schemes are prone to small spurious oscillations in the absence of sufficient diffusion. Very Recently, in \cite{DUBEY2018}, suitable diffsuion matrices are constructed for entropy stable TVD schemes and applied with TECNO schemes to further suppress the induced spurious oscillations. ENO and WENO schemes lack mathematical proofs on stability in general though it is supported through a wide range of excellent numerical evidences. It is well known that classical ENO reconstruction is computationally very costly for the reason of continuous comparison of the divided differences \cite{Shu1997}. Also a $k^{th}$ order reconstruction requires all divided differences up to $k^{th}$ order. The complications increase with the accuracy order of the ENO reconstruction. Additionally, the stencil choosing process in ENO reconstruction can experience an error in choosing the smoothest stencil. This is due to the lead of rounding up error when the divided differences involved in the comparison are equal in value. 

These above mentioned shortcomings of ENO reconstruction motivate us to search for a new algorithm. It is intended here to develop a new strategy of ENO stencil choosing process without compromising the non-oscillatory property of the scheme. We also extend the idea to construct a WENO reconstruction. The main objective in this article is to modify the ENO/WENO procedure in order to get a better reconstruction and then apply it to construct accuracy preserving ENO/WENO schemes for solving conservation laws. In this work, the focus is on finite volume formulation, however, one can proceed with the finite difference formulation too.

\par For the sake of completeness and ease of the presentation, a brief overview of ENO interpolation is given in Section \ref{ClassicalENO} which is also essential to propose the new algorithm. Next, in Section \ref{new_interpolation} we have introduced the novel stencil choosing algorithm for ENO interpolation and ENO reconstruction. New WENO reconstruction using length based non-linear weights and accuracy is given in Section \ref{WEno_re}. Numerical results for benchmark 1D and 2D test problems are given in section \ref{num_results} followed by Appendix \ref{A1}. Numerical results clearly show less oscillatory behavior with improved formal accuracy of constructed ENO and WENO schemes.    
   
\section{ENO Interpolation}\label{ClassicalENO}
Let us consider a piecewise continuous function $v(x)$ in $\Omega \subset \mathbb{R}$. The domain $\Omega$  is partitioned with the grids $\{x_i\}$, $i\in\mathbb{Z}$ and the point values are given by $v_{i}=v(x_{i})$. The $k$-th order ENO interpolation procedure in an arbitrary interval $I_{i}:=[x_{i-\frac{1}{2}}, x_{i+\frac{1}{2}}]$, where $x_{i+\frac{1}{2}}=\frac{x_{i}+x_{i+1}}{2}$ consists of two steps. In the first step (also known as ENO stencil choosing step) stencil with $k$ consecutive points $S_{pref}=\{x_{i-r},\,\dots,\,x_{i},\,\dots,\, x_{i-r+k-1}\}$ is chosen where $r\in\{0,1,\dots, \left(k-1\right)\}$. In the next step, the unique $\left(k-1\right)$-th degree polynomial passing through $S_{pref}$ is interpolated. The classical ENO stencil choosing process use up to $k$-th order divided differences $\{v[x_{i},x_{i+1},\, \dots,\, x_{i+r}]\}_i$ where the first order divided difference is $v[x_{i}]=v_i$. The ENO stencil choosing procedure is given by following (see also \cite{Shu1997,Fjordholm2013}).
\begin{algo}\label{algo:old}
	Consider the point values $v_{i-k+1},\,\dots,\,v_{i},\,\dots,\, v_{i+k-1}$.
	\begin{itemize}
		\item[(i)] Set $r=0$.
		\item[(ii)] Repeat the following for $j=1,2,\,\dots,\,k-1.$\\
		$\begin{cases}
		\text{If } |v[x_{i-r-1},\, \dots,\, x_{i-r+j-1}]|<|v[x_{i-r},\, \dots,\, x_{i-r+j}]|,\\
		 \text{Set}\,\, r=r+1.
		\end{cases}$
		\item[(iii)] The preferred stencil is $S_{pref}=\{x_{i-r},\,\dots,\,x_{i},\,\dots,\, x_{i-r+k-1}\}$
	\end{itemize}
\end{algo}
Basically, Algorithm \ref{algo:old} gives us $k$ smoothest consecutive points containing $x_i$ form  the set of $(2k-1)$ points \{$x_{i-k+1},\,\dots,\,x_{i},\,\dots,\, x_{i+k-1}$\}\cite{Shu1988}. The ENO interpolation procedure is finished by performing the interpolation on $S_{pref}$, which is given by \begin{equation}
P_{i}(x)=v[x_{i-r}]+\sum_{j=1}^{k-1}v[x_{i-r},\, \dots,\, x_{i-r+j}]\prod_{l=0}^{j-1}(x-x_{i-r+l}).
\end{equation}
Finally, the polynomial $P_{i}(x)$ is used to reconstruct the following cell interface  values
\begin{eqnarray*}
	v_{i+\frac{1}{2}}^{-}:=P_{i}(x_{i+\frac{1}{2}}),
	\\v_{i-\frac{1}{2}}^{+}:=P_{i}(x_{i-\frac{1}{2}}).
\end{eqnarray*}
As mentioned that there are some shortcomings in ENO stencil choosing algorithm  \ref{algo:old}. Following examples demonstrate that at times it may not end up choosing the smoothest stencil.
\begin{exmp}\label{exmp1}
	Let us consider the data set from the sin function $$\displaystyle \left \{(0,0),\left(\frac{\pi}{4},\frac{1}{\sqrt{2}}\right),\left(\frac{\pi}{2},1\right),\left(\frac{3\pi}{4},\frac{1}{\sqrt{2}}\right),(\pi,0)\right\}.$$
\end{exmp}
We start ENO procedure in algorithm  \ref{algo:old}, from the point $\frac{\pi}{2}$. Observe that second divided differences $v[\frac{\pi}{4},\frac{\pi}{2}]$ and $v[\frac{\pi}{2},\frac{3\pi}{4}]$ are equal in absolute value. Therefore, Algorithm  \ref{algo:old} can choose either of the stencil. It can be seen that in both the cases end up with the final preferred smoothest stencil $S_{pref}=\{\frac{\pi}{4},\frac{\pi}{2},\frac{3\pi}{4}\}$ because  \begin{equation*}
\left| v\left[0,\frac{\pi}{4},\frac{\pi}{2}\right]\right| > \left|v\left[\frac{\pi}{4},\frac{\pi}{2},\frac{3 \pi}{4}\right]\right| <\left|v\left[\frac{\pi}{2},\frac{3 \pi}{4},\pi\right]\right|.
\end{equation*}
However, it is not the always be the case. Consider a slight change in data set of the Example \ref{exmp1}.
\begin{exmp} \label{exmp2}
	Consider the point values $$\displaystyle \left \{(0,0),\left(\frac{\pi}{4},\frac{1}{\sqrt{2}}\right),\left(\frac{\pi}{2},1\right),\left(\frac{3\pi}{4},\frac{1}{\sqrt{2}}\right),(\pi,\frac{1}{2})\right \}.$$
\end{exmp} 
Here also, in the first level comparison, the divided differences  $v[\frac{\pi}{4},\frac{\pi}{2}]$ and $v[\frac{\pi}{2},\frac{3\pi}{4}]$ are equal in value and Algorithm  \ref{algo:old} can pick any of the stencil between  $\{\frac{\pi}{4},\frac{\pi}{2}\}$ and $\{\frac{\pi}{2},\frac{3\pi}{4}\}$. Note that in the comparison among third divided differences, following relation holds \begin{equation}\label{eg2p}
\left| v\left[0,\frac{\pi}{4},\frac{\pi}{2}\right]\right|>\left|v\left[\frac{\pi}{4},\frac{\pi}{2},\frac{3 \pi}{4}\right]\right|>\left|v\left[\frac{\pi}{2},\frac{3 \pi}{4},\pi\right]\right|.
\end{equation}
It can be concluded that, the choice $\{\frac{\pi}{2},\frac{3\pi}{4}\}$  as smoothest stencil in the first level comparison will end up with picking final stencil as $S_{pref}=\{\frac{\pi}{2},\frac{3\pi}{4},\pi\}$ which is indeed smoothest with smallest third divided difference. On the contrary, choice of stencil $\{\frac{\pi}{4},\frac{\pi}{2}\}$ in first level leads to get $S_{pref}=\{\frac{\pi}{4},\frac{\pi}{2},\frac{3\pi}{4}\}$  as final smoothest stencil though it has higher value for third divided difference in \eqref{eg2p}. 
\par An another related problem of choosing lesser smooth stencil can cause during computation when the truncation error lead in the comparison if divided differences are equal in absolute value.. In the next section an novel adaptive stencil choosing process is proposed to sort out some of these problems.
 
\section{Novel ENO interpolation and ENO reconstruction}\label{new_interpolation}
It can be observed that the classical ENO procedure utilizes the behavior of data to choose the smoothest stencil. In this work, the novel $k^{th}$ order ENO interpolation procedure chooses the smoothest polynomial instead smoothest stencil by analyzing the smoothness of all $k$ consecutive interpolating polynomials using $2k-1$ data set. The notion of smoothness of polynomial is inspired by the work of Rivlin in \cite{Rivlin1959} where the length of a curve is used in order to define a smoother polynomial. More precisely, given a family of polynomials the smoothest polynomial is the one which has smallest curve length.  
We define the following.
\begin{definition} \label{defn1}
	Given two polynomials $P^1(x)$ and $P^2(x)$ in $[a,\,b]\in \mathbb{R}$, we say $P^1$ is \textbf{smoother} than $P^2$ if \begin{equation}
	\mathcal{L}_{[a,\,b]}(P^1)\leq\mathcal{L}_{[a,\,b]}(P^2),	
	\end{equation}
where $\mathcal{L}_{[a,b]}$ is the length of a given polynomial in $[a,b]$ defined by\begin{equation}\label{length}
\mathcal{L}_{[a,\,b]}(P)=\int_{a}^{b}\sqrt{1+{P'(x)}^2} dx.
\end{equation}
\end{definition}
Then, using the smoothness of polynomial, we propose the following stencil choosing algorithm.
\begin{algo}[New stencil choosing process]\label{algo:new} Consider the point values $v_{i-k+1}$, $\dots$, $v_{i}$, $\dots$, $v_{i+k-1}$. Let $P^j$ $(j=0,1,2,...,k-1)$ is the polynomial which is interpolating $v_{i-k+1+j},\,\dots,\,v_{i+j}$.
	\begin{itemize}
		\item[(i)] Calculate the lengths $\mathcal{L}_{\left[x_{i-1/2},\,x_{i+1/2}\right]}(P^j)$ for $j=0,1,\,\dots,\, k-1$ 
		\item[(ii)] Find $r$ such that $\mathcal{L}_{\left[x_{i-1/2},\,x_{i+1/2}\right]}(P^r)=\displaystyle\min_{j\in \{{0,\,\dots,\,k-1}\}}\mathcal{L}_{\left[x_{i-1/2},\,x_{i+1/2}\right]}(P^j)$
		\item[(iii)] The preferred stencils are $S_{pref}=\{x_{i-r},\,\dots,\,x_{i},\,\dots,\, x_{i-r+k-1}\}$
	\end{itemize}
\end{algo}
Note that, the `smoothest polynomials' which in general differ from the polynomial on `smoothest stencil' is chosen in step (ii), therefore Algorithm \ref{algo:new} may leads to picking stencil different from the one picked by Algorithm \ref{algo:old}. For example, again consider the data of Example \ref{exmp2} with the Algorithm \ref{algo:new}. The lengths of second degree polynomials are
\begin{equation*}
{\mathcal{L}_{[0,\, \frac{\pi}{2}]}(P^0(x))=1.90627, \mathcal{L}_{[\frac{\pi}{4}, \,\frac{3\pi}{4}]}(P^1(x))=1.7062,	\mathcal{L}_{[\frac{\pi}{2}, \,\pi]}(P^2(x))=1.65115.}
\end{equation*}
Observed that the new Algorithm \ref{algo:new} picks the polynomial $P^{2}(x)$ as smoothest which is natural as the corresponding third divided difference is smallest in \eqref{eg2p}.\\
 The above stencil choosing process is quite simple and in case of non uniqueness of  minimal length polynomial, index $r$ in $(ii)$ step of Algorithm \ref{algo:new} can be chosen corresponding to any of the same length polynomials. The total number of comparison in (ii) of Algorithm \ref{algo:old} and in (ii) of Algorithm \ref{algo:new} are equal. However, for a $k$-th order reconstruction in Algorithm \ref{algo:old} we require all divided differences up to $k-1$ degree whereas in Algorithm \ref{algo:new} the only quantity to calculate is the lengths of polynomials.
\par ENO reconstruction is closely related to ENO interpolation. Let us recall the ENO reconstruction procedure from \cite{Shu1997}. Unlike point values $v_{i}$ in ENO interpolation, here following cell averages are considered. \begin{equation} 
\bar{v_{i}}=\frac{1}{\Delta x_{i}}\int_{x_{i-\frac{1}{2}}}^{x_{i+\frac{1}{2}}} v(z) dz.
\end{equation}
$k^{th}$ order ENO reconstruction finds a polynomial $p_{i}(x)\in I_{i},\,(i\in \mathbb{Z})$ such that for $x\in I_{i}$\begin{equation}\label{avraccr}
p_{i}(x)=v(x)+O({\Delta x}^k).
\end{equation}
Let $P^r(x), \, (r=0,1,...,k-1)$ interpolates the primitive function $V(x)$ defined by \begin{equation}
V(x)=\int_{\infty}^{x}v(z) dz,
\end{equation} 
on $k+1$ preferred  stencils \begin{equation}
S_{pref}=\{x_{i-r-\frac{1}{2}},\,\dots,\,x_{i+\frac{1}{2}},\,\dots,\, x_{i-r+k-\frac{1}{2}}\} .
\end{equation}
Then $p^r_{i}(x)$ is defined as \begin{equation}\label{ENO_pol}
p^r_{i}(x)=\frac{d}{dx}P^r(x).
\end{equation}
It is shown in \cite{Shu1997} that such $p^r_{i}(x)$ satisfies \eqref{avraccr}. Here, an idea similar to Algorithm \eqref{algo:old} is used to find the preferred stencil $S_{pref}$ (for details see \cite{Fjordholm2013}). We change the stencil choosing process similar to the interpolation procedure. We choose $r$ such that \begin{equation}
\mathcal{L}_{\left[x_{i-1/2},\,x_{i+1/2}\right]}(p^r_i)=\displaystyle\min_{j\in \{{0,\,\dots,\,k-1}\}}\mathcal{L}_{\left[x_{i-1/2},\,x_{i+1/2}\right]}(p^j_i).
\end{equation}
Finally define, \begin{eqnarray*}
	v_{i+\frac{1}{2}}^{-}:=p^r_{i}(x_{i+\frac{1}{2}}),
	\\v_{i-\frac{1}{2}}^{+}:=p^r_{i}(x_{i-\frac{1}{2}}).
\end{eqnarray*}
\section{Novel WENO Reconstruction}\label{WEno_re}
\justify
As WENO reconstruction is more relevant than WENO interpolation in solving conservation laws \cite{Shu1988,Shu2009}, therefore in this section modification is given only in the WENO reconstruction procedure. However, WENO interpolation procedure can be modified similarly. A WENO reconstruction is done by taking a combination of all $k^{th}$ order ENO polynomials $p^r$ in section   \ref{new_interpolation} with suitable non-linear weights in order to get $2k-1$ order approximation. More precisely, on $2k-1$ point stencil $\{x_{i-k+\frac{1}{2}},\,\dots,\,x_{i+\frac{1}{2}},\,\dots,\, x_{i+k-\frac{1}{2}}\}$, the classical WENO reconstruction is defined as \begin{subequations}\label{WENO_Re}
	\begin{equation}
	v_{i+\frac{1}{2}}^{-}=\sum_{j=0}^{k-1} \omega_j p^{j}_i(x_{i+\frac{1}{2}}),
	\end{equation}
	\begin{equation}
	v_{i-\frac{1}{2}}^{+}=\sum_{j=0}^{k-1} \tilde{\omega}_j p^{j}_i(x_{i-\frac{1}{2}}).
	\end{equation}
\end{subequations}
where non-linear weights $w_j, \tilde{w}_j$ are given by
\begin{subequations}
	\begin{equation}
	\omega_j=\frac{\alpha_j}{\sum_{p=0}^k\alpha_p},\, \, \tilde{\omega}_j=\frac{\tilde{\alpha}_j}{\sum_{p=0}^k\tilde{\alpha}_p},
	\end{equation}
	with 
	\begin{equation}\label{beta_j}
	\alpha_j=\frac{\gamma_{j}}{(\epsilon+\beta_j)^2},\,\,\tilde{\alpha}_j=\frac{\tilde{\gamma}_{j}}{(\epsilon+\beta_j)^2}.
	\end{equation}
\end{subequations}
The constants $\gamma_{j}$ and $\tilde{\gamma}_j$ are chosen such that 
\begin{equation*}
\sum_{j=0}^{k-1} \gamma_j p^{j}_i(x_{i+\frac{1}{2}})-v(x_{i+\frac{1}{2}})=O(h^{2k-1}),
\end{equation*}
and
\begin{equation*}
\sum_{j=0}^{k-1} \tilde{\gamma}_j p^{j}_i(x_{i-\frac{1}{2}})-v(x_{i-\frac{1}{2}})=O(h^{2k-1}).
\end{equation*}
The smoothness parameters $\beta_j$'s in \eqref{beta_j} are given by\begin{equation}\label{beta}
\beta_j=\sum_{l=1}^{k}\int_{x_{i-\frac{1}{2}}}^{x_{i+\frac{1}{2}}}{\Delta x}^{2l-1}\left(\frac{d^l}{dx^l}p^{j}_i(x)\right)^2 dx, \,(j=0,1,...,k-1)
\end{equation} 
It is needed to be emphasized that the original WENO schemes suffers from the accuracy drops at critical points and extensive amount of work is done to redefine the smoothness paremeter $\beta_j$'s given in \eqref{beta} to achieve improved accuracy at critical points \cite{borges2008improved,henrick2005mapped}.The improvement using transformation in \cite{borges2008improved} is interesting as resulting fifth order WENO scheme (known as WENO-Z scheme) retained its formal accuracy at critical points. The transformed smoothness indicators in \cite{borges2008improved} denoted as $\beta^Z_{j}$ and defined by 
\begin{equation}\label{betaZ}
\frac{1}{\beta^Z_{j}}=\left(1+\frac{\tau_5}{\beta_j+\epsilon}\right), \tau_5:=|\beta_0-\beta_2|; j=0,1,2
\end{equation}
 There are other choices of smoothness parameter $\beta_j$'s in literature, some of them can be found in \cite{serna2004power,fan2014new,ha2013improved,kim2016modified,Rathan2017modified}. However, most of them are developed in order to get better accuracy and are mostly a  translation of \eqref{beta}. 

Here in order to design WENO reconstruction we modify the smoothness parameter $\beta_j$'s by utilizing the idea of polynomial length as in new ENO reconstruction. In particular, it is done by defining $\beta_j$'s based on the length of the ENO polynomials. For the fifth order $(i.e., k=2)$ WENO reconstruction new smoothness parameters defined as
\begin{equation}\label{newbeta}
\beta_j=\left(\mathcal{L}_{[x_{i-\frac{1}{2}},\,x_{i+\frac{1}{2}}]}(p^j_i)\right)^2, \,(j=0,1,2).
\end{equation} 
An another variant can be obtained by transforming the smoothness indicator in \eqref{newbeta} by using \eqref{betaZ}. Thanks to the length based $\beta_j$'s, the resulting WENO scheme retains the higher order accuracy in the smooth region of data including critical points as shown below.
\subsubsection*{Accuracy of WENO Reconstruction}
Observed that the classical fifth order WENO reconstruction satisfy the following accuracy relation\begin{equation*}
v^{\pm}_{i+\frac{1}{2}}=v(x_{i+\frac{1}{2}})+O({\Delta x}^{5}).
\end{equation*}
In order to attain the same accuracy, smoothness parameter in \eqref{newbeta} must satisfy the sufficient condition \cite{Jiang1996}
\begin{equation}\label{suff_beta}
\beta_j=D(1+O({\Delta x}^2)), \,(j=0,1,2),
\end{equation}
where $D$ may be depend on $\Delta x$.

\begin{thrm}
	Smoothness parameters $\beta_j$(j=0,1,2) defined in \eqref{newbeta} satisfy  \eqref{suff_beta}.
\end{thrm}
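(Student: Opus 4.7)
The plan is to reduce the claim to a pointwise derivative estimate on each candidate polynomial $p^j_i$ and then integrate, producing a common leading quantity $D$ that is the same across all three stencils. A natural candidate is
\begin{equation*}
D := \left(\int_{x_{i-1/2}}^{x_{i+1/2}} \sqrt{1 + v'(x)^2}\,dx\right)^2,
\end{equation*}
i.e., the square of the exact arc length of the smooth function $v$ on $I_i$. Since this $D$ is independent of the stencil index $j$, it suffices to show that $\mathcal{L}_{[x_{i-1/2},\,x_{i+1/2}]}(p^j_i) = \sqrt{D}\,(1 + O(\Delta x^2))$ for each $j\in\{0,1,2\}$; squaring then yields \eqref{suff_beta}.

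First I would invoke the standard accuracy of cell-average based reconstruction. When $v$ is smooth on the full $2k-1$-point stencil, each candidate polynomial $p^j_i$ satisfies $p^j_i(x) - v(x) = O(\Delta x^3)$ uniformly on $I_i$; differentiating the Lagrange remainder of the corresponding primitive interpolant gives $(p^j_i)'(x) - v'(x) = O(\Delta x^2)$ uniformly on $I_i$, with the hidden constants controlled by bounded derivatives of $v$ on the compact stencil. Crucially, this estimate is insensitive to whether $v'(x_i)$ vanishes, so it remains valid at critical points.

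The key manipulation is the pointwise bound obtained by rationalizing:
\begin{equation*}
\sqrt{1 + (p^j_i)'(x)^2} - \sqrt{1 + v'(x)^2} = \frac{\bigl((p^j_i)'(x) - v'(x)\bigr)\bigl((p^j_i)'(x) + v'(x)\bigr)}{\sqrt{1+(p^j_i)'(x)^2} + \sqrt{1+v'(x)^2}} = O(\Delta x^2),
\end{equation*}
uniformly in $x\in I_i$, since the first factor in the numerator is $O(\Delta x^2)$, the second factor is $O(1)$ (bounded by $\|v'\|_\infty$ plus the uniform approximation error), and the denominator is bounded below by $2$. Integrating over an interval of length $\Delta x$ then gives $\mathcal{L}_{[x_{i-1/2},\,x_{i+1/2}]}(p^j_i) - \sqrt{D} = O(\Delta x^3)$.

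Since $\sqrt{D} \geq \Delta x$ (the integrand defining $\sqrt{D}$ is pointwise at least $1$), the additive $O(\Delta x^3)$ error converts into a relative error of order $\Delta x^2$, giving $\mathcal{L}_{[x_{i-1/2},\,x_{i+1/2}]}(p^j_i) = \sqrt{D}\,(1 + O(\Delta x^2))$. Squaring yields $\beta_j = D\,(1 + O(\Delta x^2))$ for $j=0,1,2$, establishing \eqref{suff_beta}. The main technical point to watch is that the derivative estimate in the second paragraph must hold with a constant uniform across all three candidate stencils and uniform on $I_i$ even when $v$ has a critical point; this is the standard interpolation error bound and does not degrade at critical points, so no separate argument is needed.
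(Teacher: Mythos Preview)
Your argument is correct and takes a genuinely different route from the paper. The paper proceeds by explicit Taylor expansion: it writes out each length $\mathcal{L}_{[x_{i-1/2},x_{i+1/2}]}(p^j_i)$ as $\sqrt{1+v'^2}\,\Delta x + C_j\,\Delta x^3 + \cdots$ (with a separate case analysis when the quadratic coefficient $c_{i-1},c_i,c_{i+1}$ vanishes, since the closed-form length formula in the Appendix changes), then reads off that the leading term is common to all $j$ and squares. Your approach is more conceptual: you take $D$ to be the square of the exact arc length of $v$ on $I_i$, use the standard reconstruction bound $(p^j_i)'-v'=O(\Delta x^2)$, rationalize the difference of square roots (exploiting that the denominator is bounded below by $2$, which is what makes the critical-point case harmless), and integrate. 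What you gain is a proof that avoids the $c=0$ case split and the explicit coefficients entirely, and that would scale to higher $k$ without rewriting the expansions; what the paper's computation buys is the exact $\Delta x^3$ coefficients, which also make visible why the result does not degrade when $v'=0$ (the leading term $\sqrt{1+v'^2}\,\Delta x$ stays nonzero). Both establish \eqref{suff_beta} with legitimate, though different, choices of $D$.
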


 \begin{proof}
 	Let $c_i=\bar{v}_{i-1}-2\bar{v}_i+\bar{v}_{i+1}$ for all $i\in \mathbb{Z}$. Then,
Taylor series expansion of the lengths (see Appendix \ref{A1}) result the following
\begin{subequations}\label{taylor}
	{\small\begin{equation}
		\mathcal{L}_{[x_{i-\frac{1}{2}},\,x_{i+\frac{1}{2}}]}(p_i^0)
		=\begin{cases}
		\sqrt{1+{v'}^2}\Delta x +\displaystyle \frac{{v''}^2-8{v'}{v'''}-8{v'}^3{v'''}}{24 (1+{v'}^2)^{3/2}}{\Delta x}^3+O({\Delta x}^4),& \hspace{-5pt}\text{$c_{i-1}\neq0$} \\
		\sqrt{1+{v'}^2}\Delta x +\displaystyle\frac{{v'}{v'''}}{3 \sqrt{1+{v'}^2}}{\Delta x}^3+O({\Delta x}^4),& \hspace{-10pt}\text{otherwise}
		\end{cases}
		\end{equation}}
	{\small\begin{equation}
		\mathcal{L}_{[x_{i-\frac{1}{2}},\,x_{i+\frac{1}{2}}]}(p_i^1)=\begin{cases}
		\sqrt{1+{v'}^2}\Delta x +\displaystyle\frac{{v''}^2+4{v'}{v'''}+4{v'}^3{v'''}}{24 (1+{v'}^2)^{3/2}}{\Delta x}^3+O({\Delta x}^5),& \hspace{-5pt}\text{$c_{i}\neq0$}\\
		\sqrt{1+{v'}^2}\Delta x +\displaystyle\frac{{v'}{v'''}}{6 \sqrt{1+{v'}^2}}{\Delta x}^3+O({\Delta x}^5),& \hspace{-10pt}\text{otherwise}
		\end{cases}
		\end{equation}}
	{\small\begin{equation}
		\mathcal{L}_{[x_{i-\frac{1}{2}},\,x_{i+\frac{1}{2}}]}(p_i^2)=\begin{cases}
		\sqrt{1+{v'}^2}\Delta x +\displaystyle\frac{{v''}^2-8{v'}{v'''}-8{v'}^3{v'''}}{24 (1+{v'}^2)^{3/2}}{\Delta x}^3+O({\Delta x}^4),&\hspace{-5pt} \text{$c_{i+1}\neq0$}\\
		\sqrt{1+{v'}^2}\Delta x +\displaystyle\frac{{v'}{v'''}}{3 \sqrt{1+{v'}^2}}{\Delta x}^3+O({\Delta x}^4),& \hspace{-10pt}\text{otherwise}
		\end{cases}
		\end{equation}}
\end{subequations}
Clearly, from \eqref{taylor} and \eqref{newbeta} we can conclude\eqref{suff_beta}. 
 \end{proof}
\begin{note}
If $v'=0$, the leading terms in \eqref{taylor} still remain non-zero which prevent the accuracy drop on critical points.
\end{note}

\section{Numerical results}\label{num_results}
Semi-discrete scheme to solve the conservation laws \begin{equation}\label{conservationlaws}
\mathbf{u}_t+\mathbf{f}(\mathbf{u})_x=0
\end{equation}
is given by
\begin{subequations}\label{semidiscrete}
	\begin{equation}
	\frac{d\mathbf{u}_i(t)}{dt}= L(\mathbf{u}_i),
	\end{equation}
	\begin{equation}
	L(\mathbf{u}_i)=-\frac{1}{\Delta x}\left[{\hat{\mathbf{f}}}_{i+\frac{1}{2}} - {\hat{\mathbf{f}}}_{i-\frac{1}{2}}\right],
	\end{equation}
\end{subequations}
where $\hat{\mathbf{f}}_{i+1/2}$ is numerical flux reconstructed from point values $\{\mathbf{f}(\mathbf{u}_i)\}$. In system case, component wise reconstruction defined in \cite{Shu1997} is used which is method is simple and computationally much more efficient compared to characteristic wise reconstruction method as it is free of calculating several Jacobian matrices. On applying the flux splitting 
\begin{equation}
\mathbf{f}=\mathbf{f}^++\mathbf{f}^-,
\end{equation}
such that $\dfrac{d\mathbf{f}}{d\mathbf{u}}^+\geq 0$ and $\dfrac{d\mathbf{f}}{d\mathbf{u}}^-\leq 0$.
Next for $m^{th}$ component $f^{(m)\pm}$ of $\mathbf{f}^{\pm}$, take $\bar{v}_{i}=f^{(m)\pm}_i$ and then apply ENO/WENO reconstruction procedure to get the $m^{th}$ component of ${\hat{\mathbf{f}}}_{i+\frac{1}{2}}^{\pm}$ defined as\begin{equation}
{\hat{f}^{(m)\pm}}_{i+\frac{1}{2}}=v_{i+\frac{1}{2}}^{\mp}.
\end{equation} The numerical flux is obtained by \begin{equation}
{\hat{\mathbf{f}}}_{i+\frac{1}{2}}={\hat{\mathbf{f}}}_{i+\frac{1}{2}}^{+}+{\hat{\mathbf{f}}}_{i+\frac{1}{2}}^{-}.
\end{equation}
\par The following third order SSP Runge Kutta time discretization used to fully discretize the semi-discrete scheme \eqref{semidiscrete}
\begin{equation}
\begin{cases}\label{3rdSSP}
\mathbf{u}^{(1)}=  \mathbf{u}^n+\Delta t L(\mathbf{u}^n),\\
\mathbf{u}^{(2)}= \frac{3}{4}\mathbf{u}^n+\frac{1}{4} \mathbf{u}^{(1)}+\frac{1}{4}\Delta t L(\mathbf{u}^{(1)}),\\
\mathbf{u}^{(n+1)} = \frac{1}{3}\mathbf{u}^n+\frac{2}{3} \mathbf{u}^{(2)}+\frac{2}{3}\Delta t L(\mathbf{u}^{(2)}).
\end{cases}
\end{equation}

\par For purity of this section, the numerical results are presented where one can calculate the lengths exactly, that is, using third order ENO reconstruction and fifth order WENO reconstruction. The new third order ENO scheme using the reconstruction defined in section \ref{new_interpolation} and the fifth order WENO scheme using the reconstruction in section \ref{WEno_re} are termed as  ENO3-$\mathcal{L}$ and WENO5-$\mathcal{L}$ respectively. We call the new WENO scheme  WENO5-Z$\mathcal{L}$ obtained by using the translation of new smoothness indicator through \eqref{betaZ}.
\begin{remark} The length of a polynomial curve can be calculated explicitly if the order of the polynomial is less than or equal to three. However, for higher order polynomial one can approximate the length in very efficient way by calculating the integration involved in \eqref{length} numerically  using a Gauss quadrature method. 
\end{remark}
\subsection{Scalar Conservation Laws}
\subsubsection*{Linear advection equation (Accuracy test)}
The accuracy of ENO3-$\mathcal{L}$ and WENO5-$\mathcal{L}$ are tested for linear advection equation,
\begin{equation}\label{transport}
u_t+u_x=0,
\end{equation}
 using the initial data
  \begin{equation}\label{IC2} u(x,0)=sin^4(\pi x)\; \text{in periodic domain}\; [0,1]
 \end{equation} at a final time $t=0.55$. The $L^\infty$ and $L^1$ errors are presented in Table \ref{tab:ENO3} and Table \ref{tab:ENOL} for schemes ENO3 (third order ENO) and ENO3-$\mathcal{L}$ respectively which shows the better accuracy by ENO3-$\mathcal{L}$. ENO3-$\mathcal{L}$ achieves its third order of accuracy.  It is well known that the classical fifth order WENO scheme fails to achieve its desire accuracy which can also be seen in Table \ref{tab:WENOJS}. Table \ref{tab:WENOL} shows that the WENO5-$\mathcal{L}$ scheme achieve its fifth order accuracy.
 \begin{table}[htb!]
 	\centering
 	\begin{tabular}{|c|c|c|c|c|}
 		\hline N  & $L^\infty$ error  &  Rate &    $L^1$ error    & Rate \\ 
 		\hline 20 & 0.137523991504696 &  ...  & 0.124370622299486 & ... \\ 
 		\hline 40 & 0.022484345494414 &  2.61 & 0.019284894776954 & 2.69 \\ 
 		\hline 80 & 0.004684412057962 &  2.26 & 0.003665119042135 & 2.40 \\ 
 		\hline 160 & 0.000806458682295 &  2.54 & 0.000525027379226 & 2.80 \\ 
 		\hline 320 & 0.000216993609355 &  1.89 & 0.000093980577132 & 2.48 \\ 
 		\hline 
 	\end{tabular} 
 \caption{Convergence rate of ENO3.}
\label{tab:ENO3}
 \end{table}
\begin{table}[htb!]
	\centering
	\begin{tabular}{|c|c|c|c|c|}
		\hline N  & $L^\infty$ error  &  Rate &    $L^1$ error    & Rate \\ 
		\hline 20 & 0.138889189045624 &  ...  & 0.123166131013945 & ... \\ 
		\hline 40 & 0.022407420203306 &  2.63 & 0.018438218920095 & 2.74 \\ 
		\hline 80 & 0.003934923788340 &  2.51 & 0.003355520207242 & 2.46 \\ 
		\hline 160 & 0.000537950648222 &  2.87 & 0.000438182124548 & 2.94 \\ 
		\hline 320 & 0.000080526081835 &  2.74 & 0.000051962761009 & 3.08 \\ 
		\hline 
	\end{tabular} 
\caption{Convergence rate of ENO3-$\mathcal{L}$.}
\label{tab:ENOL}
\end{table}

\begin{table}[htb!]
	\centering
	\begin{tabular}{|c|c|c|c|c|}
		\hline N  & $L^\infty$ error  &  Rate &    $L^1$ error    & Rate \\ 
		\hline 20 & 0.094055534844950 &  ...  & 0.080917224223634 & ... \\ 
		\hline 40 & 0.007890539989593 &  3.58 & 0.006756269739283 & 3.58 \\ 
		\hline 80 & 0.001365228509699 &  2.53 & 0.000671991117245 & 3.33 \\ 
		\hline 160 & 0.000127329078137 &  3.42 & 0.000036747388665 & 4.19 \\ 
		\hline 320 & 0.000012043415363 &  3.40 & 0.000001992307404 & 4.21 \\ 
		\hline 
	\end{tabular} 
	\caption{Convergence rate of WENO5-JS.}
	\label{tab:WENOJS}
\end{table}
\begin{table}[htb!]
	\centering
	\begin{tabular}{|c|c|c|c|c|}
		\hline N  & $L^\infty$ error  &  Rate &    $L^1$ error    & Rate \\ 
		\hline 20 & 0.095122975489415 &  ...  & 0.090373562413356 & ... \\ 
		\hline 40 & 0.006873815741128 &  3.79 & 0.004734156253646 & 4.25 \\ 
		\hline 80 & 0.000215125442465 &  5.00 & 0.000196079943647 & 4.59 \\ 
		\hline 160 & 0.000007758132987 &  4.79 & 0.000006404741272 & 4.94 \\ 
		\hline 320 & 0.000000251699574 &  4.95 & 0.000000198826305 & 5.01 \\ 
		\hline 
	\end{tabular} 
	\caption{Convergence rate of WENO5-$\mathcal{L}$.}
	\label{tab:WENOL}
\end{table}

\subsubsection*{Burgers equation}
This non-linear scalar conservation law is considered in order to show the non-oscillatory property of the proposed schemes. The following smooth initial condition is taken in $[-1,1]$, \begin{equation}\label{tp:shu_osher_burger}
u(x,0)=1+\frac{1}{2}\sin (\pi x)
\end{equation}
which produce a shock at time $\displaystyle t=\frac{2}{\pi}$. Solutions are calculated at $t=1$ by using ENO3-$\mathcal{L}$ and WENO5-$\mathcal{L}$, WENO5-Z$\mathcal{L}$ and plotted in Figure \ref{Burgert3} along with the solution by ENO3 and WENO5-JS and WENO5-Z respectively. No big oscillations are observed by the schemes. In Figure \ref{Burgert3} it can be seen that WENO5-$\mathcal{L}$ and  WENO5-Z$\mathcal{L}$ provide slightly sharper resolution to discontinuity than WENO5-JS and WENO5-Z. Convergence rates by the scheme ENO3-$\mathcal{L}$ and WENO5-$\mathcal{L}$ are also calculated at time $T=\frac{2}{\pi}$ and presented in Table \ref{tab:burger_Acc_ENOL} and Table \ref{tab:burger_Acc_WENOL} respectively. 
\begin{figure}
	\centering
	\includegraphics[scale=0.55]{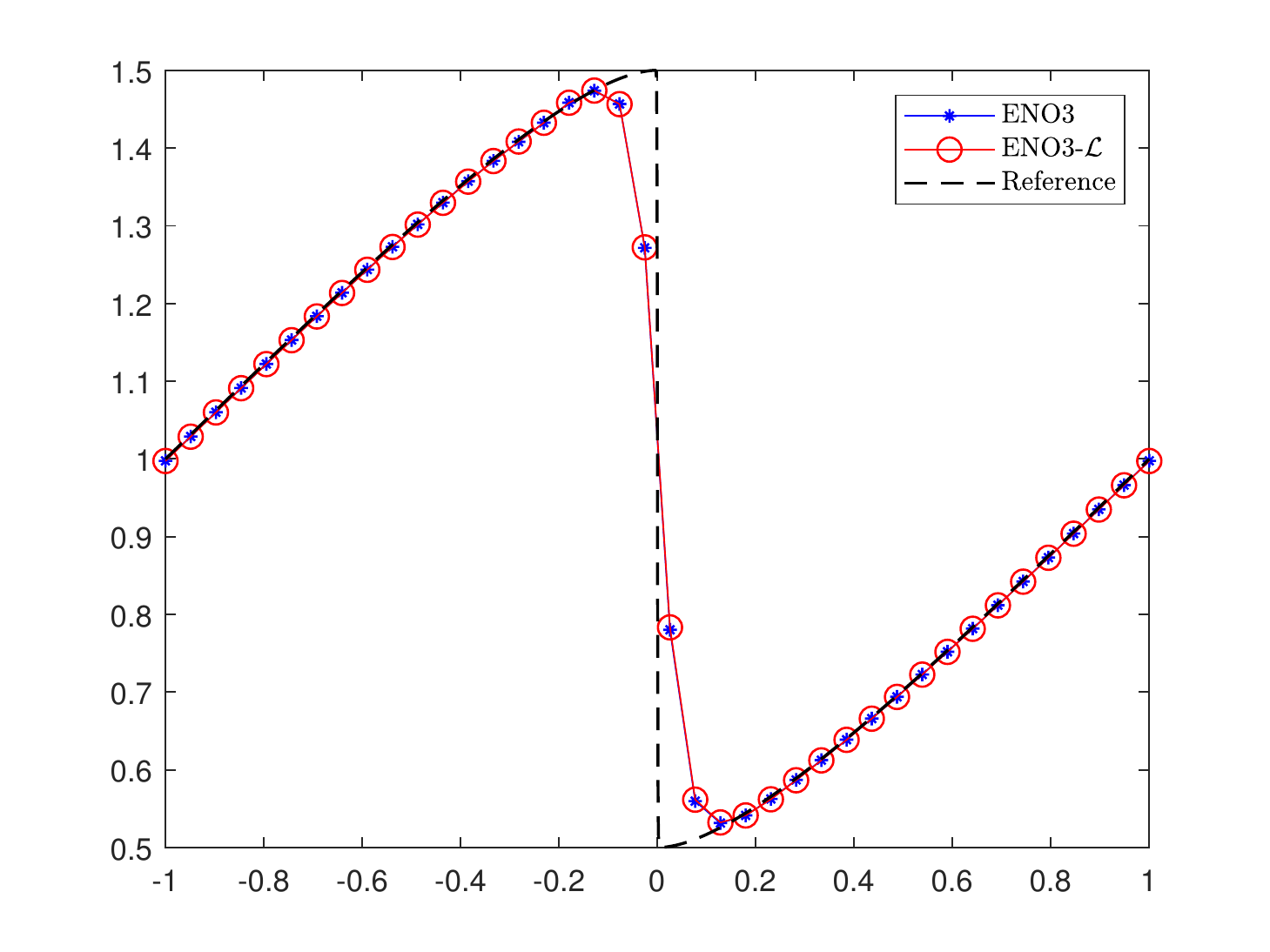}
	\includegraphics[scale=0.55]{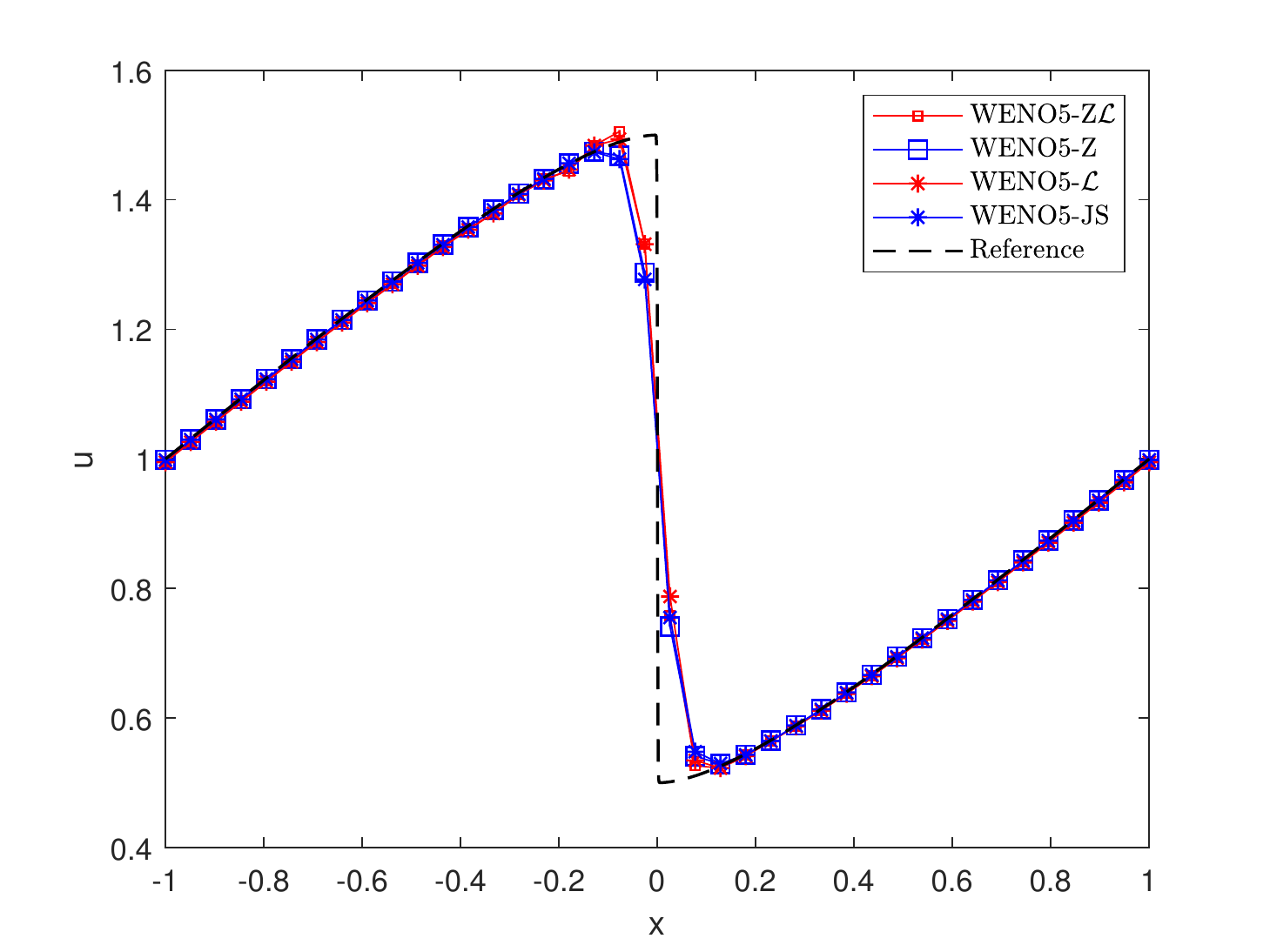}
	\caption[Solution of Burger equation by ENO-$\mathcal{L}$ and WENO-$\mathcal{L}$.]{Solution of Burger equation at $t=1$ with initial ($t=0$) given data \eqref{tp:shu_osher_burger}.}
	\label{Burgert3}
\end{figure}
\begin{table}[htb!]
	\centering
	\begin{tabular}{|c|c|c|c|c|}
		\hline N  & $L^\infty$ error  &  Rate &    $L^1$ error    & Rate \\ 
		\hline 20 & 0.004610240510194 &  ...  & 0.002825622061104 & ... \\ 
		\hline 40 & 0.000742098084899 &  2.64 & 0.000352048256755 & 3.00 \\ 
		\hline 80 & 0.000142409790295 &  2.38 & 0.000042142988693 & 3.06 \\ 
		\hline 160 & 0.000030147895097 &  2.24 & 0.000005499473205 & 2.94 \\ 
		\hline 320 & 0.000006255477703 &  2.27 & 0.000000707089805 & 2.96 \\ 
		\hline 
	\end{tabular} 
	\caption{Convergence rate of ENO3-$\mathcal{L}$.}
	\label{tab:burger_Acc_ENOL}
\end{table}
\begin{table}[htb!]
	\centering
	\begin{tabular}{|c|c|c|c|c|}
		\hline N  & $L^\infty$ error  &  Rate &    $L^1$ error    & Rate \\ 
		\hline 20 & 0.001301463623011 &  ...  & 0.000619425528920 & ... \\ 
		\hline 40 & 0.000044125068907 &  4.88 & 0.000016956879987 & 5.19 \\ 
		\hline 80 & 0.000001010357188 &  5.45 & 0.000000396311294 & 5.42 \\ 
		\hline 160 & 0.000000028251509 &  5.16 & 0.000000011220498 & 5.14 \\ 
		\hline 320 & 0.000000000875224 &  5.01 & 0.000000000344007 & 5.03 \\ 
		\hline 
	\end{tabular} 
	\caption{Convergence rate of WENO5-$\mathcal{L}$.}
	\label{tab:burger_Acc_WENOL}
\end{table}

\subsection{System of conservation laws: 1D Euler Equations}
The WENO5-$\mathcal{L}$ and WENO5-Z$\mathcal{L}$ schemes are applied to 1D Euler equations \begin{equation}\label{Eulereqn}
\begin{array}{ccc}
\left(\begin{array}{c} 
\rho\\
\rho u \\
E 
\end{array}\right)_t
&+ &
\left(\begin{array}{c} 
\rho u\\
\rho u^{2}+p \\
u(E+p) 
\end{array}\right)_x=0,
\end{array}
\end{equation}
where total energy $E$ and the pressure $p$ are related by the following equation
\begin{equation}
p=(\gamma-1)\left(E-\frac{1}{2}\rho u^{2}\right).
\end{equation}

The result is compared between the solution by WENO5-JS and WENO5-Z for the well known Riemann test problems.
\subsubsection*{Sod shock tube test}
Consider the Sod tube problem which is the Riemann problem \begin{equation}\label{Rie_prb}
(\rho, u, p)(x,0)=\begin{cases}
(\rho_l,\;u_l,\; p_l) & \text{if $x<0$,}\\
(\rho_r,\;u_r,\; p_r) & \text{if $x\geq0$,}
\end{cases}
\end{equation} with $(\rho_l,\;u_l,\; p_l)$ = $(1,\; 0,\; 1)$ and $(\rho_r,\;u_r,\; p_r)$=$(0.125,\; 0,\; 0.1)$. Result are plotted in Figure \ref{SodWENO}. In this case all the results are similar, however, slightly better results can be seen by ENO3-$\mathcal{L}$ in compare to ENO3 in the solution region between the contact discontinuity and shock.
\begin{figure}
	\centering
	\includegraphics[scale=0.55]{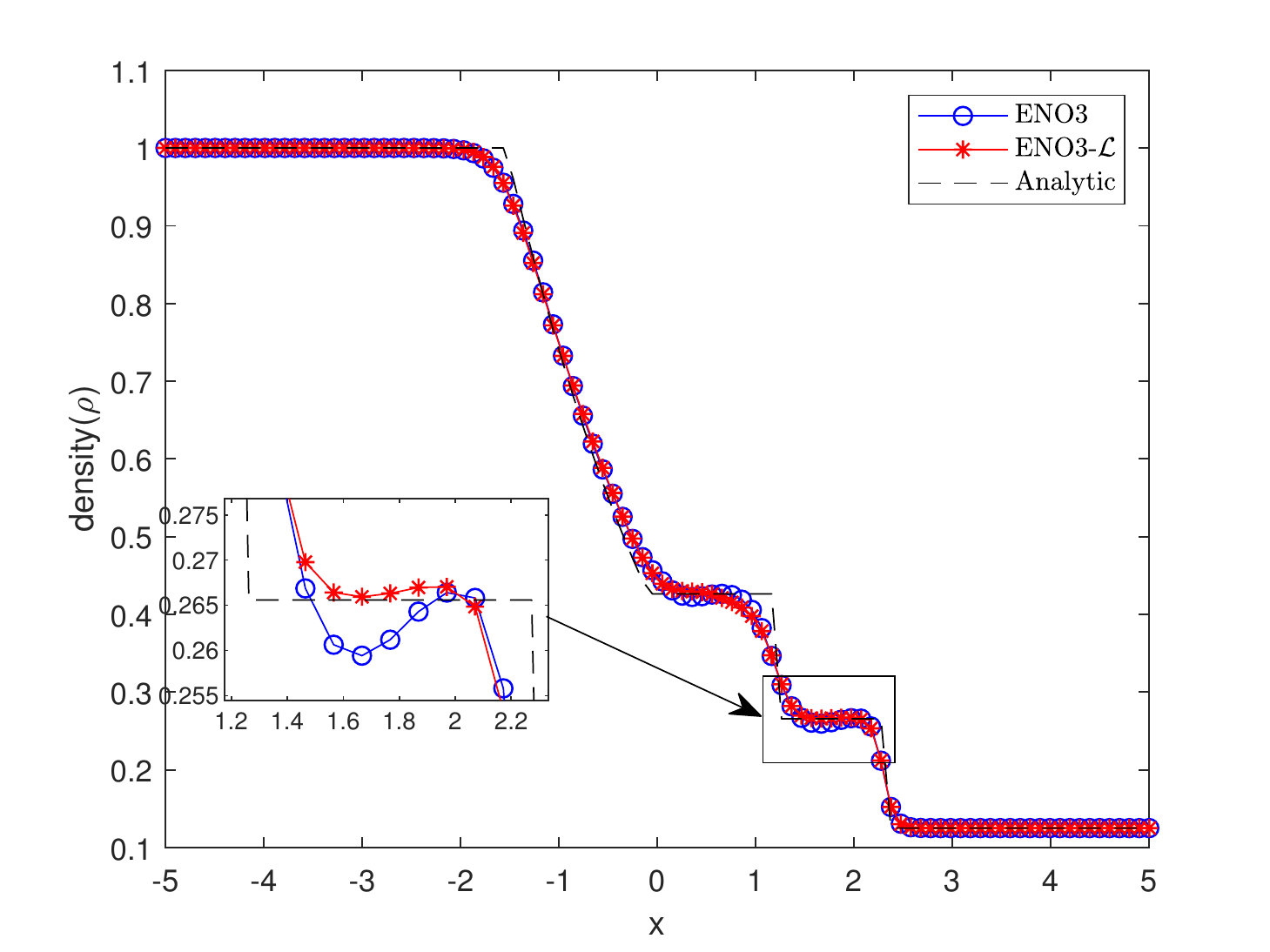}
	\includegraphics[scale=0.55]{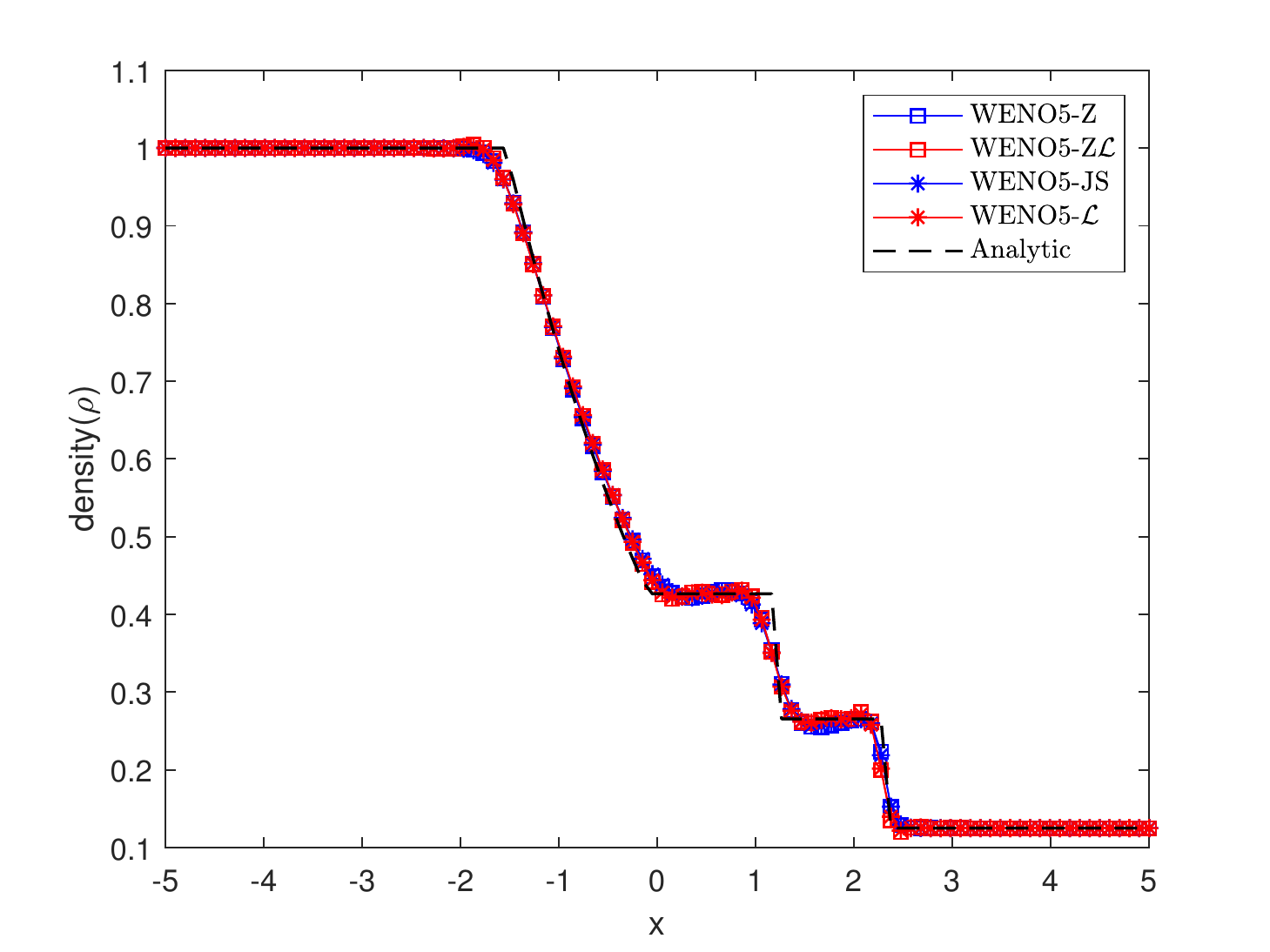}
	\caption{Solution of Sod shock tube test.}
	\label{SodWENO}
\end{figure}
\subsubsection*{Lax tube test}
We consider the Lax tube problem which is to solve Euler 1D equations \eqref{Eulereqn} with the Riemann data \eqref{Rie_prb} where  $(\rho_l,\;u_l,\; p_l)$=$(0.445,\; 0.698,\; 3.528)$ and $(\rho_r,\;u_r,\; p_r)=(0.5,\; 0, \;0.571)$. This test problem contains a comparatively strong shock. Since length of the ENO polynomial has the major role in ignoring the discontinuous region it is expected a better solution near strong shock. WENO5 -$\mathcal{L}$ shows improvement over WENO5-JS and WENO5-Z near strong shocks in Figure \ref{LaxWENO}. ENO3-$\mathcal{L}$ also shows better capturing of the shock.
\begin{figure}
	\centering
	\includegraphics[scale=0.55]{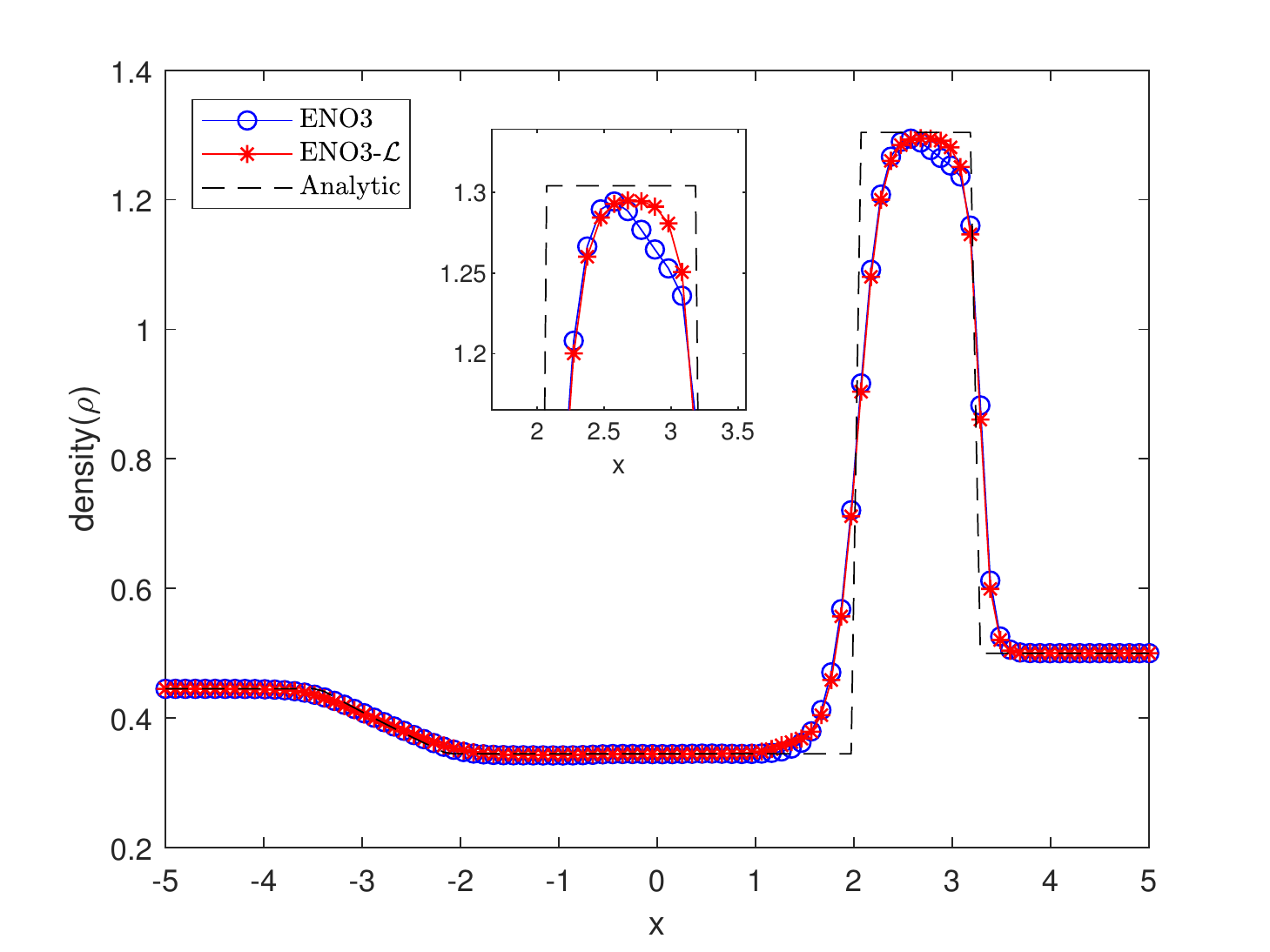}
	\includegraphics[scale=0.55]{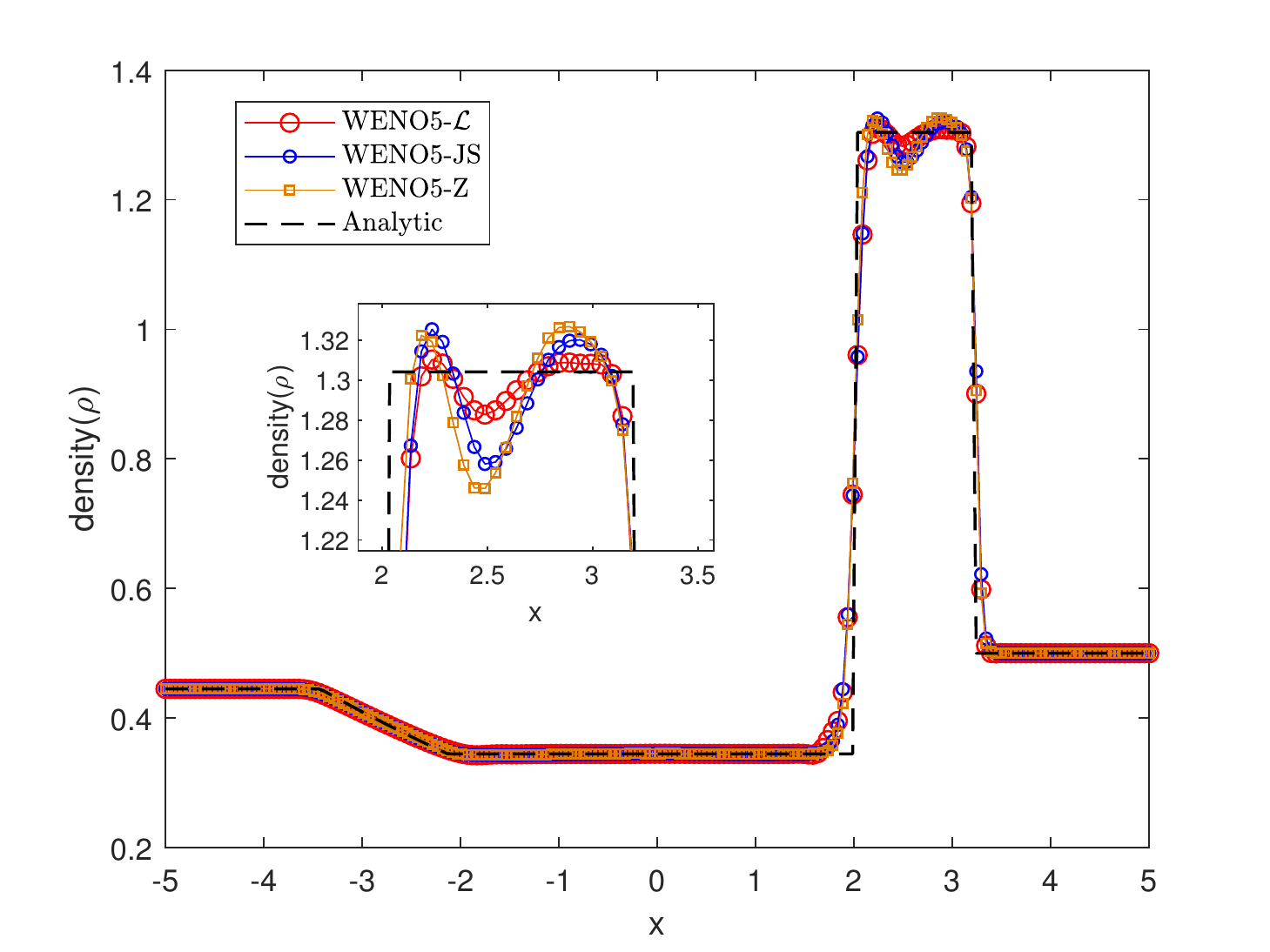}
	\caption{Solution of Lax tube test.}
	\label{LaxWENO}
\end{figure}
\subsection{2D System of conservation laws}
Finally, the scheme is applied to 2D Euler equations \begin{equation}\label{euler2d}
\displaystyle\left(\begin{array}{c}
\rho      \\
\rho u    \\
\rho v   \\
e 
\end{array}\right)_t
+
\displaystyle\left(\begin{array}{c}
\rho u\\
\rho u^2+p\\
\rho uv\\
u(e+p)
\end{array}\right)_x
+\displaystyle\left(\begin{array}{c}
\rho v\\
\rho uv\\
\rho v^2+p\\
v(e+p)
\end{array}\right)_y=0 .
\end{equation}
where $\rho$, $u$, $v$ are density and velocity components along $x$ and $y$
axis respectively. The pressure ($p$) and the energy ($e$) are related by,
\begin{equation}
e=\frac{p}{\gamma-1}+\frac{\rho(u^2+v^2)}{2}.
\end{equation}
The following test problem is considered in order to test the shock capturing property of the scheme in much more complicated problem.

\subsubsection*{2D Riemann problem}
The two dimensional Euler equations is considered with Riemann data, \begin{equation}\label{Rieprb_re}
(p,\rho,u,v)(x,y,0)=\begin{cases}
\text{($p_1$, $\rho_1$, $u_1$, $v_1$),} & \text{if $x>0.5$ and $y>0.5$,}\\
\text{($p_2$, $\rho_2$, $u_2$, $v_2$),} & \text{if $x<0.5$ and $y>0.5$,} \\
\text{($p_3$, $\rho_3$, $u_3$, $v_3$),} & \text{if $x<0.5$ and $y<0.5$,} \\
\text{($p_4$, $\rho_4$, $u_4$, $v_4$),} & \text{if $x>0.5$ and $y<0.5$.}
\end{cases}
\end{equation} 
where the initial data are chosen from \cite{schulz1993numerical} and given in Table \ref{DSinitial}. 

\begin{table}
	\begin{center}
		\begin{tabular}{cccc}
			\hline 
			$ p_1=1.5000 $ & $ p_2=0.3000$ & $p_3=0.0290$ & $ p_4=0.3000$ \\ 
			$ \rho_1=1.5000 $ & $ \rho_2=0.5323$ & $\rho_3=0.1380$ & $ \rho_4=0.5323$ \\ 
			$ u_1=0.0000 $ & $ u_2=1.2060$ & $u_3=1.2060$ & $ u_4=0.0000$ \\ 
			$ v_1=0.0000 $ & $ v_2=0.0000$ & $v_3=1.2060$ & $ v_4=1.2060$ \\ \hline 
		\end{tabular}
		\caption{Initial data for Riemann problem \eqref{Rieprb_re}.}
		\label{DSinitial}
	\end{center}
\end{table}
 Results by WENO5-Z$\mathcal{L}$ is compared with WENO5-JS in Figure \ref{DSinitial}. The modified scheme with new smoothness parameter\eqref{newbeta} captures more characteristics of the flow in Figure \ref{DSinitial} than the WENO5-JS scheme.
\begin{figure}
	\centering
	\includegraphics[scale=0.5]{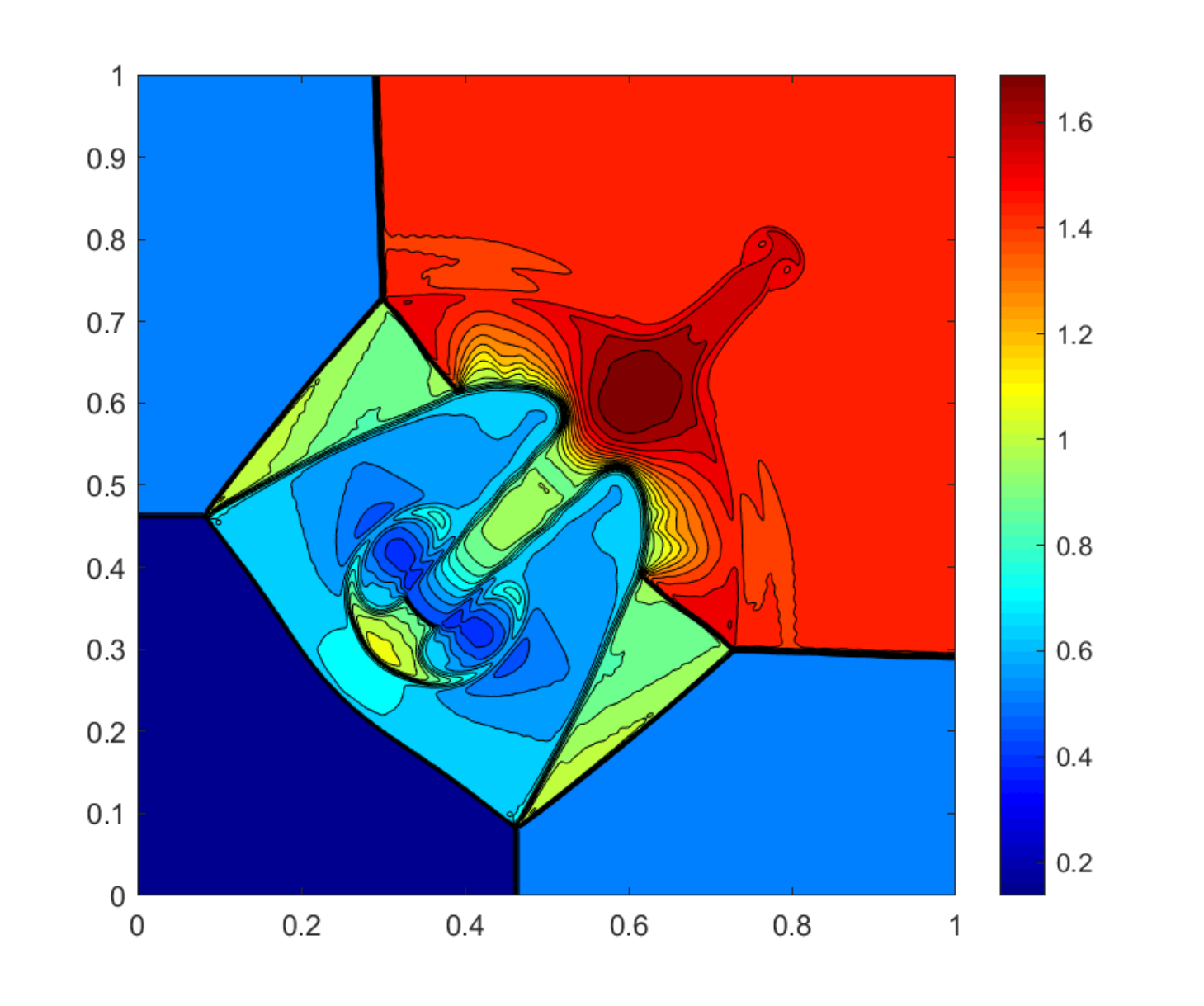}
	\includegraphics[scale=0.5]{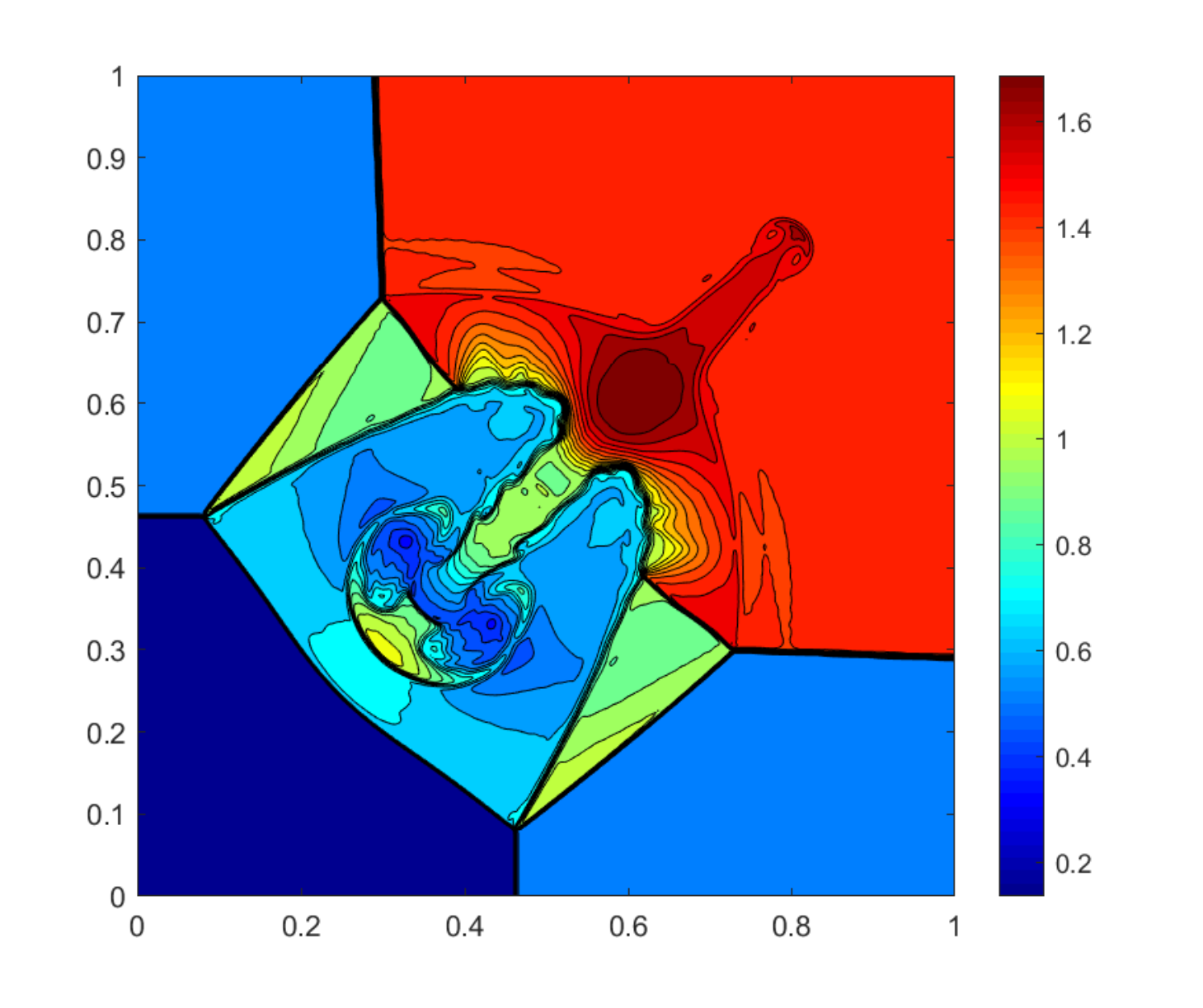}
	\caption[2D Riemann problem by WENO5-JS and WENOZ-$\mathcal{L}$.]{Surface plot of the density in solution profile of \eqref{euler2d} with initial data in Table \ref{DSinitial}. (left) Using scheme WENO5-JS , (right) Using scheme WENO5-Z$\mathcal{L}$}
\end{figure}
\subsubsection*{Implosion Problem}
This is one of the interesting problem from several other problems described in \cite{Hui1999unified}. However, the actual domain and boundary condition is used from \cite{Liska2003comparison}. Gas is kept inside a square domain  $\left[-0.3,0.3\right]\times \left[-0.3,0.3\right]$ in $x-y$ plane. Initial Density and pressure distribution of the gas are following,
\begin{equation*}\label{imp}
\begin{cases}
 \rho (x,y)=0.125, p(x,y)=0.14, & \text{if $|x|+|y|<0.15$ }\\
\rho (x,y)=1, p(x,y)=1. & \text{otherwise }
\end{cases}
\end{equation*}
Initially the velocities are kept zero in the computational domain $\left[0,0.3\right]\times \left[0,0.3\right]$ with reflecting boundary. Notable improvement can be observed in Figure \eqref{Implosion} by WENO5-Z$\mathcal{L}$ over WENO5-JS.
\begin{figure}
	\centering
	\includegraphics[scale=0.5]{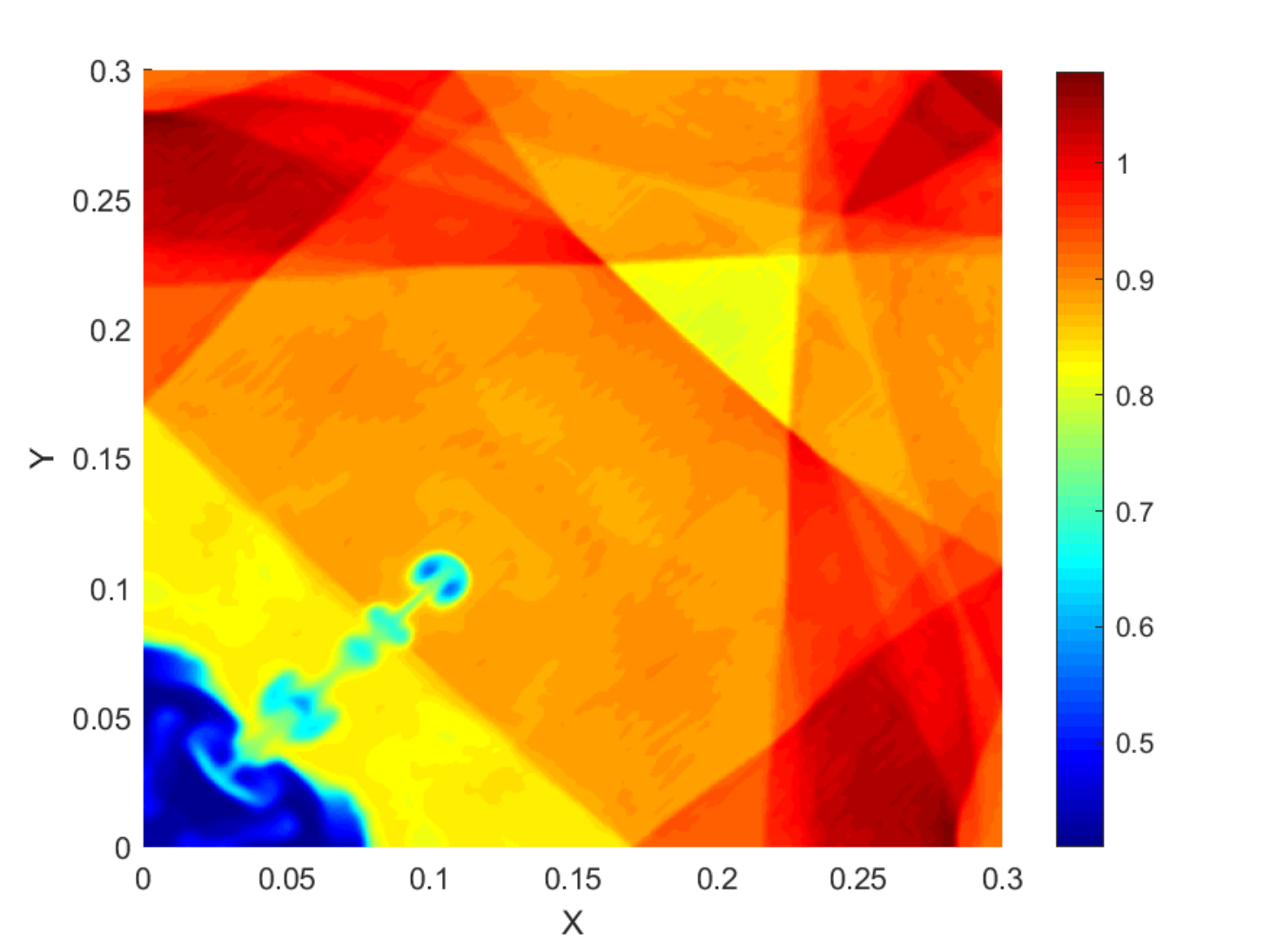}
	\includegraphics[scale=0.5]{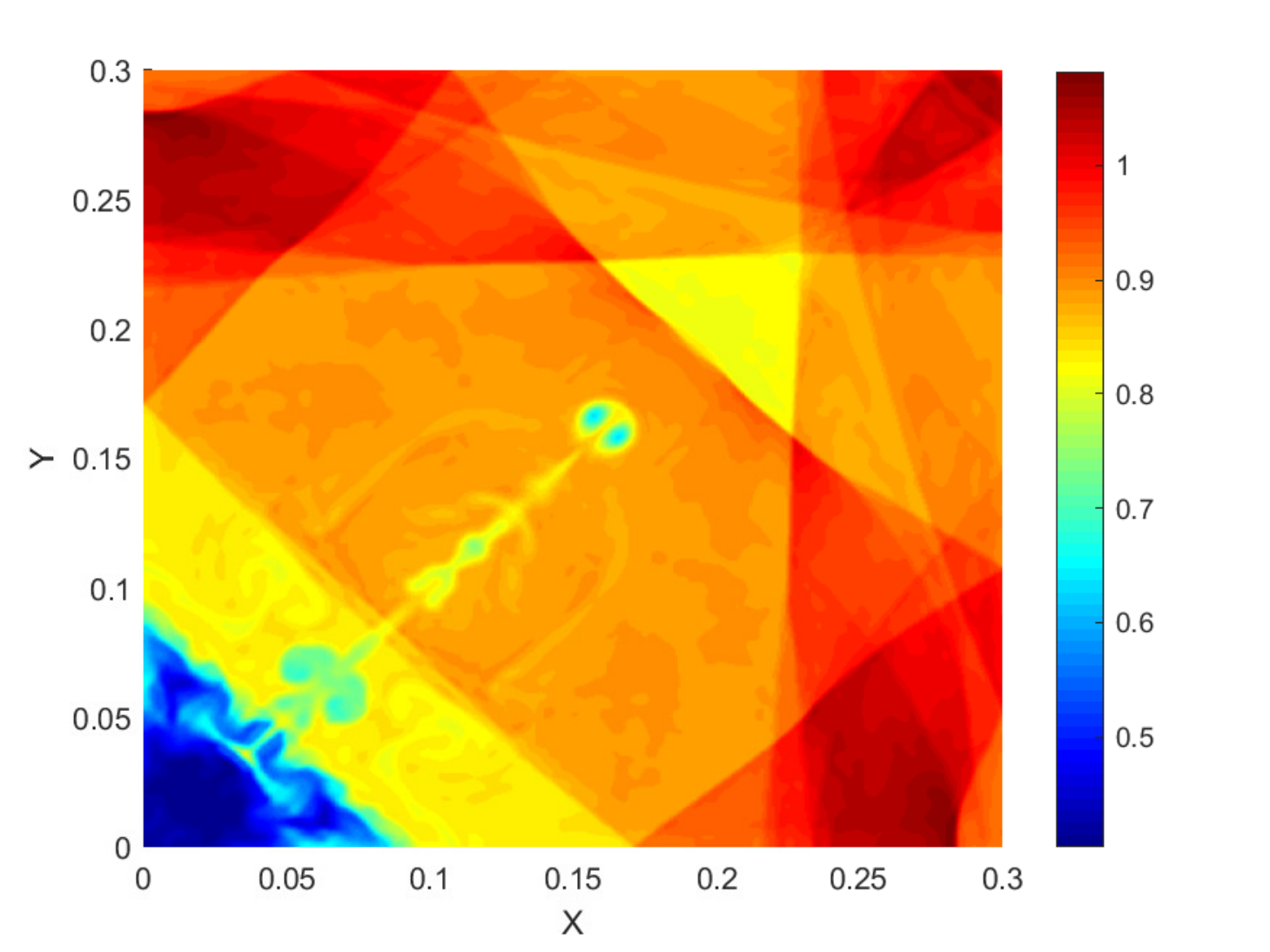}
	\caption[2D Riemann problem by WENO5-JS and WENOZ-$\mathcal{L}$.]{Surface plot of the density in solution profile of \eqref{euler2d} for implosion Problem. (left) Using scheme WENO5-JS , (right) Using scheme WENO5-Z$\mathcal{L}$}
	\label{Implosion}
\end{figure}
\subsubsection*{Explosion Problem}
Explosion test problem is presented in \cite{Toro2009mult}. In this case a square domain $\left[-3,3\right]\times \left[-3,3\right]$ in $x-y$ plane is considered and gas with higher density and pressure is kept inside a circular disc of radius $0.4$ and center $(0,0)$. Initial velocities are kept zero whereas the density and pressure profiles are given by,
\begin{equation*}
\begin{cases}
\rho (x,y)=1, p(x,y)=1, & \text{if $x^2+y^2<(0.4)^2$ }\\
\rho (x,y)=0.125, p(x,y)=0.1. & \text{otherwise }
\end{cases}
\end{equation*}
Computation is done in $\left[0,3\right]\times \left[0,3\right]$ with reflecting boundary. Sharper resolution near discontinuity can be seen in Figure \eqref{Explosion}.
\begin{figure}
	\centering
	\includegraphics[scale=0.5]{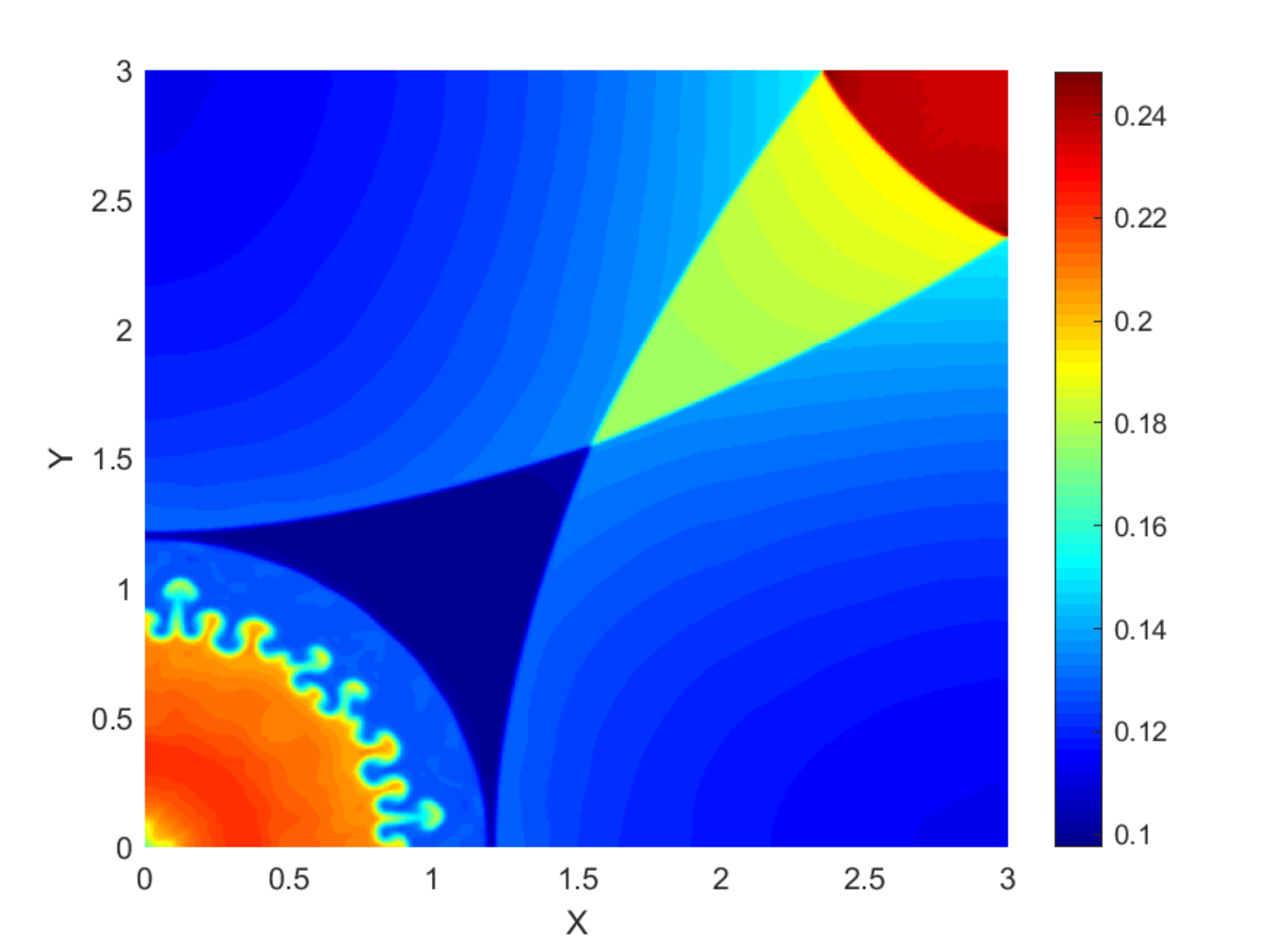}
	\includegraphics[scale=0.5]{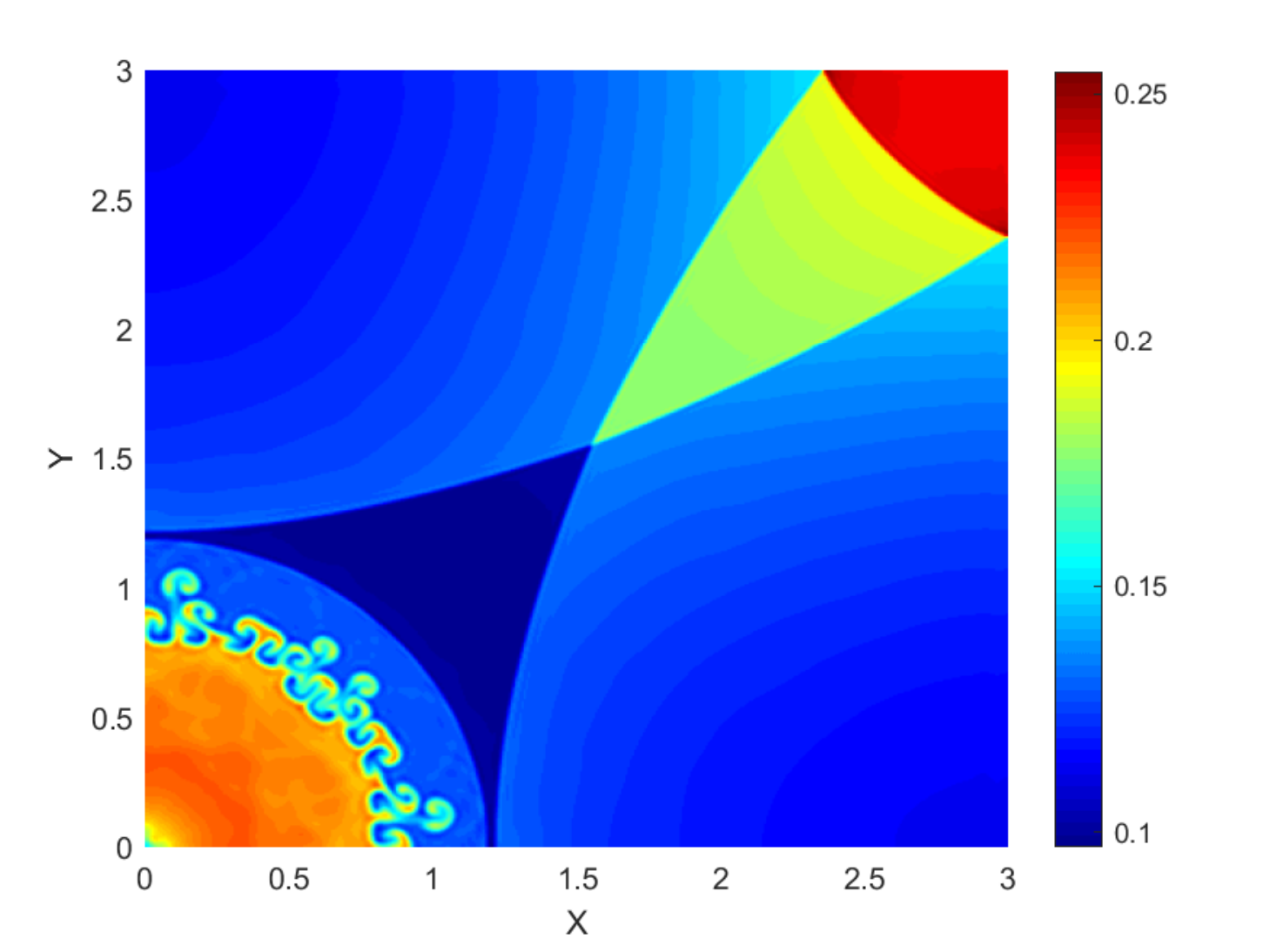}
	\caption[2D Riemann problem by WENO5-JS and WENO5-Z$\mathcal{L}$.]{Surface plot of the density in solution profile of \eqref{euler2d} for explosion Problem. (left) Using scheme WENO5-JS , (right) Using scheme WENO5-Z$\mathcal{L}$}
	\label{Explosion}
\end{figure}
\section*{Final Discussion}
Length of the polynomial curve is used to find the smoothest ENO polynomial in ENO reconstruction. The smoothness indicator of the WENO scheme also defined using the length of the ENO polynomials. The resulting ENO and WENO schemes preserves accuracy at critical points and perform well in capturing shocks with much reduced oscillations. The new WENO schemes namely, WENO5-$\mathcal{L}$ and WENO5-Z$\mathcal{L}$ shown improvements near strong shocks in 1D and 2D test problems.

\section*{Acknowledgement}
Authors acknowledge Council of Scientific and Industrial Research(CSIR), New Delhi, Govt. of India for providing fellowship to Mr. Biswarup Biswas (File no. 09/1045(0009)2K17) and SERB India for its financial support towards procuring computing facility (File No.EMR/2016/000394 )  

\begin{appendices}
	\section{ENO polynomials and computation of their lengths}\label{A1}
	For third order ENO reconstruction, $p^j_i(x)$ (j=0,1,2) in \eqref{ENO_pol} can be explicitly written as \cite{Shu1997}
	\begin{subequations}
		\begin{eqnarray}
		p^0_i(x)=& & - \frac{{(\Delta x)}^2+12 {(\Delta x)} (x_i-x)-12 (x_i-x)^2}{24 {(\Delta x)}^2}\bar{v}_{i-2} \nonumber\\
		& & + \frac{{(\Delta x)}^2+24 {(\Delta x)} (x_i-x)-12 (x_i-x)^2}{12 {(\Delta x)}^2}\bar{v}_{i-1} \nonumber\\
		& & + \frac{23 {(\Delta x)}^2-36 {(\Delta x)} (x_i-x)+12 (x_i-x)^2}{{24 {(\Delta x)}^2}} \bar{v}_i
		\end{eqnarray}
		\begin{eqnarray}
		p^1_i(x)=& & -\frac{{(\Delta x)}^2-12 {(\Delta x)} (x_i-x)-12 (x_i-x)^2}{24 {(\Delta x)}^2}\bar{v}_{i-1} \nonumber\\	
			& & + \frac{13 {(\Delta x)}^2-12 (x_i-x)^2}{12 {(\Delta x)}^2}\bar{v}_i \nonumber\\			
			& &- \frac{{(\Delta x)}^2+12 {(\Delta x)} (x_i-x)-12 (x_i-x)^2}{24 {(\Delta x)}^2}\bar{v}_{i+1}
		\end{eqnarray}
		\begin{eqnarray}
		p^2_i(x)=& & \frac{23 {(\Delta x)}^2+36 {(\Delta x)} (x_i-x)+12 (x_i-x)^2}{24 {(\Delta x)}^2}\bar{v}_i\nonumber\\
			& & +\frac{{(\Delta x)}^2-24 {(\Delta x)} (x_i-x)-12 (x_i-x)^2}{12 {(\Delta x)}^2}\bar{v}_{i+1}\nonumber\\
			& & -\frac{{(\Delta x)}^2-12 {(\Delta x)} (x_i-x)-12 (x_i-x)^2}{24 {(\Delta x)}^2}\bar{v}_{i+2}
		\end{eqnarray}
	\end{subequations}
As all the above polynomials are at most second degree they can also be written in the form \begin{equation*}
p^k_i(x)=a_k+b_k x+c_k x^2 \mbox{ for some $a_k, b_k, c_k$; } (k=0,1,2)
\end{equation*}
Let us denote \begin{equation*}
\mathcal{I}[b,c,z]:=\begin{cases}
\displaystyle \frac{(b+2 c z) \sqrt{(b+2 c z)^2+1}+\sinh ^{-1}(b+2 c z)}{4 c} & \text{if $c\neq0$}\\
\sqrt{1+b^2}z & \text{if $c=0$}
\end{cases}
\end{equation*}
Then the explicit lengths are given by
\begin{equation*}
\mathcal{L}_{[x_{i-\frac{1}{2}},\,x_{i+\frac{1}{2}}]}(p_i^k)=\mathcal{I}[b_k,c_k,x_{i+\frac{1}{2}}]-\mathcal{I}[b_k,c_k,x_{i-\frac{1}{2}}],\,\, (k=0,1,2).
\end{equation*}
\end{appendices}
\pagebreak
\bibliographystyle{alpha}
\bibliography{mybib}
\end{document}